\documentclass[12pt]{article}%
\usepackage{graphicx}
\usepackage{amsmath}
\usepackage{amsfonts}
\usepackage{amssymb}%
\setcounter{MaxMatrixCols}{30}
\providecommand{\U}[1]{\protect\rule{.1in}{.1in}}
\newtheorem{theorem}{Theorem}

\newtheorem{claim}[theorem]{Claim}

\newtheorem{corollary}[theorem]{Corollary}

\newenvironment{proof}[1][Proof]{\noindent\textbf{#1.} }{\ \rule{0.5em}{0.5em}}
\begin{document}

\title{\textbf{Stationary States of the Generalized Jackson
Networks\footnotetext{\noindent Part of this work has been carried out in the
framework of the Labex Archimede (ANR-11-LABX-0033) and of the A*MIDEX project
(ANR-11-IDEX-0001-02), funded by the \textquotedblleft Investissements
d'Avenir" French Government programme managed by the French National Research
Agency (ANR). Part of this work has been carried out at IITP RAS. The support
of Russian Foundation for Sciences (project No. 14-50-00150) is gratefully
acknowledged.}}}
\author{Alexander Rybko$^{^{\ddag\ddag}}\,$ and Senya Shlosman$^{^{\ddag},^{\ddag
\ddag}}$\\$^{^{\ddag}}$Aix Marseille Universit\'{e}, Universit\'{e} de Toulon, \\CNRS, CPT UMR 7332, 13288, Marseille, France\\$^{^{\ddag\ddag}}$Inst. of the Information Transmission Problems,\\RAS, Moscow, Russia }
\maketitle

\begin{abstract}
We consider Jackson Networks on general countable graphs and with arbitrary
service times. We find natural sufficient conditions for existence and
uniqueness of stationary distributions. They generalise these obtained earlier
by Kelbert, Kontsevich and Rybko.

\textbf{Keywords:} Jackson Networks, countable Markov chains, uniqueness of
the stationary distribution, mean-field, double semi-stochastic matrices.

\end{abstract}

\section{Introduction}

This is a paper about open queuing networks. In our previous papers
\cite{RS1}, \cite{RS2} we were considering closed \textquotedblleft mean
field\textquotedblright\ queuing systems. The closedness of the system means
that the customers are never leaving the system, while new customers never
come to it. The \textquotedblleft mean field\textquotedblright\ condition
means that the network of $N$ servers forms a complete graph, and after being
served the customer is allowed to go for his next service to any of $N$
servers with uniform probability $\frac{1}{N}$. In addition, the random
service time $\eta$ is the same for all customers and for all servers.

In the present paper we will consider open networks, when customers are
leaving the system after several steps, while outside customers are coming for
service. We will show that under general conditions of not being overloaded
such systems always satisfy the Poisson Hypothesis (PH). We will relax the
mean-field symmetry of our system. (It is the largest symmetry possible,
corresponding to the action of the permutation group $S_{N}.$) Namely, we will
allow different values for the probabilities to go to different servers, as
well as different service times, $\eta_{i},$ depending on server.

The rough idea why PH always holds for our class of open systems is the
following. As we know from \cite{RS2}, the reason for the possible violation
of PH is that the memory about the initial state of the system is preserved,
to some degree. Since, however, every customer of the open system spends in it
only a finite average time, the memory of the initial state fades away as the
number of the customers initially present in the system goes to zero with time.

Here we will study one special class of networks, which are called Jackson
Networks. The Jackson Network -- JN -- is defined by the transition matrix
$P;$ the servers have exponential service times. The Generalized Jackson
Network -- GJN -- is defined by the transition matrix $P$ and arbitrary
service times. For the finite JN and GJN the conditions of existence and
uniqueness of the stationary distribution are known. For the infinite JN such
conditions were obtained by Kelbert, Kontsevich and Rybko, \cite{KKR}. For the
infinite GJN they were not known.

In this paper we find sufficient conditions of existence and uniqueness of the
stationary distribution for the mean-field limits of Generalized Jackson
Networks (both finite and infinite), i.e. for the corresponding Non-Linear
Markov Processes.

\section{Open systems}

\subsection{Finite number of server groups}

In the present section we consider the simplest case of the open system we can
treat. To define it we need to have a Markov chain $\mathfrak{M}_{m}$ with
$m+1$ states, the number $m\geq1$ being the number of different types of
servers in our network.

\subsubsection{Markov chain}

Let $P$ be a transition matrix of finite Markov chain $\mathfrak{M}_{m}$ with
$m+1$ states $1,2,...,m,\infty$, where the last state $\infty$ is absorbing.
We assume that all the matrix elements $p_{ij}>0$ for $1\leq i,j\leq m,$ and
that the probability $p_{i\infty}$ to get from the state $i$ to $\infty$ is
positive for at least \textbf{one} $i:$%
\begin{equation}
p_{i\infty}=1-\sum_{j=1}^{m}p_{ij}>0. \label{10}%
\end{equation}

\subsubsection{\label{network} The servers network}

Let $Nm$ be the total number of servers in our queuing network. We assume
further that to every group $i=1,...,m$ the random service time, $\eta_{i}$,
is assigned.

Once a customer starts her service at a server in the $i$-th group, it lasts a
random time $\eta_{i}.$ After that time the customer leaves the server. Then
with probability $p_{i\infty}$ the customer leaves the network, while she goes
to the type $j$ server with probability $p_{ij}.$ Within the group $j$ she
chooses one of its $N$ servers uniformly, with probability $\frac{1}{N}.$ If
it is occupied, the customer goes into the queue, being the last in it.

In addition, to every server of the $i$-th type there is assigned an inflow of
external customers, which is Poisson flow with the constant rate $v_{i}.$
Assuming that thus defined Markov process is ergodic, we denote by $\pi_{N}$
its stationary distribution, which is a measure on $\Omega^{Nm}$.

Clearly, in order to have the ergodicity of the network, we need some sort of
condition that the system is not overloaded. (In fact, it is sufficient for
the ergodicity, see \cite{FR}.) Let the vector $V=\left(  v_{1},...,v_{m}%
\right)  .$ We have to assume that the vector\
\begin{equation}
\bar{V}=V+VP+VP^{2}+..., \label{11}%
\end{equation}
which solves the equation on $\Lambda:$%
\begin{equation}
\Lambda=V+\Lambda P, \label{09}%
\end{equation}
satisfy for all $i$ the relation
\begin{equation}
\mathbb{E}\left(  \eta_{i}\right)  \bar{V}_{i}<1. \label{08}%
\end{equation}
(Note that under our hypothesis $\left(  \ref{10}\right)  $ the equation
$\left(  \ref{09}\right)  $ always has exactly one solution, which is given by
the (convergent) series $\left(  \ref{11}\right)  $.)

Since we are relying on the results of the paper \cite{RS1}, we have to impose
some conditions on the service time distributions. We will assume the
following properties: for each $i$

\begin{enumerate}
\item the density function $p_{i}\left(  t\right)  $ of $\eta_{i}$ is positive
on $t\geq0$ and uniformly bounded from above;

\item $p_{i}\left(  t\right)  $ satisfies the following strong Lipschitz
condition: for some $C<\infty$ and for all $t\geq0$
\[
\left\vert p_{i}\left(  t+\Delta t\right)  -p_{i}\left(  t\right)  \right\vert
\leq Cp_{i}\left(  t\right)  \left\vert \Delta t\right\vert ,
\]
provided $t+\Delta t>0$ and $\left\vert \Delta t\right\vert <1;$

\item introducing the random variables
\[
\eta_{i}\Bigm|_{\tau}=\left(  \eta_{i}-\tau\Bigm|\eta_{i}>\tau\right)
,\tau\geq0,
\]
we suppose that for some $\delta>0,$ $M_{\delta,\tau}<\infty$
\[
\mathbb{E}\left(  \eta_{i}\Bigm|_{\tau}\right)  ^{2+\delta}<M_{\delta,\tau}.
\]
Of course, this condition holds once
\[
M_{\delta}\equiv\mathbb{E}\left(  \eta_{i}\right)  ^{2+\delta}<\infty.
\]

\item the probability density $p_{i}\left(  t\right)  $ is differentiable in
$t,$ with $p_{i}^{\prime}\left(  t\right)  $ continuous. Moreover, let us
introduce the functions $p_{i,\tau}\left(  t\right)  ,$ which are the
densities of the random variables $\eta_{i}\Bigm|_{\tau},$ i.e.%

\[
p_{i,\tau}\left(  t\right)  =\frac{p_{i}\left(  t+\tau\right)  }{\int
_{0}^{\infty}p_{i}\left(  t+\tau\right)  dt}.
\]
We need that the function $p_{i,\tau}\left(  0\right)  $ is bounded uniformly
in $\tau\geq0$ (and in $i$ -- for the infinite network) while the function
$\frac{d}{d\tau}p_{i,\tau}\left(  0\right)  $ is continuous and bounded
uniformly in $\tau\geq0;$

\item the limits $\lim_{\tau\rightarrow\infty}p_{i,\tau}\left(  0\right)  ,$
$\lim_{\tau\rightarrow\infty}\frac{d}{d\tau}p_{i,\tau}\left(  0\right)  $
exist and are finite.\medskip
\end{enumerate}

In what follows we will always assume all these properties, \textbf{unless
stated otherwise}.

\subsubsection{ \label{wph} Weak PH}

In the limit as $N\rightarrow\infty$ we have a convergence to a system of
Non-Linear Markov Processes (NLMP), under proviso $\left(  \ref{08}\right)  $
above that the network is not overloaded. Informally, this process can be
described as follows.\textbf{ }It is the evolution of the collection of
measures $\mu_{1}\left(  t\right)  ,\mu_{2}\left(  t\right)  ,...,\mu
_{m}\left(  t\right)  ,$ which describe the states of the nodes $1,2,...,m.$
Each of the measures $\mu_{i}\left(  t\right)  $ is a probability distribution
on possible queues at the corresponding node $i$ at time $t.$ According to our
service rules each state $\mu_{i}\left(  t\right)  $ generates an exit
(non-Poissonian in general) flow from the node $i,$ having the rate function
$b_{i}\left(  t\right)  .$ On the other hand, the inflows to every node are
Poissonian with the rate functions $\lambda_{i}\left(  t\right)  =v_{i}%
+\sum_{j=1}^{m}b_{j}\left(  t\right)  p_{ji}.$ For more details the reader
should consult \cite{BRS} and \cite{RS1}.

The weak PH is the following statement:

\begin{theorem}
In the limit $N\rightarrow\infty$ the network described in Sect. \ref{network}
has the following properties:

1. the (total) flows of customers to different servers become independent;

2. the (total) flow of customers to any server of $i$-th type, $i=1,...,m,$
tends to a Poisson flow with the rate function $\lambda_{i}\left(  t\right)  $
(which depends on the initial state of our system);

3. the (\textbf{non}-Poissonian) limiting flow of customers from any server of
$i$-th type, $i=1,...,m,$ has rate function $b_{i}\left(  t\right)  $ (also
depending on the initial state), and for every $i$ we have
\begin{equation}
\lambda_{i}\left(  t\right)  =v_{i}+\sum_{j=1}^{m}b_{j}\left(  t\right)
p_{ji}. \label{006}%
\end{equation}
The statement holds provided the properties 4 and 5 of the service time
distributions are valid. We need no conditions on the initial state, and we do
not suppose the underload relation $\left(  \ref{08}\right)  $.
\end{theorem}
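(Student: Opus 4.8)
The plan is to derive all three assertions from a single fact: as $N\to\infty$ the arrival stream at any tagged server becomes a Poisson flow whose rate is fixed self-consistently by the empirical state of the network. I would take as state variables the empirical measures $\mu_i^N(t)$ recording the distribution of queue-configurations among the $N$ servers of group $i$, since the mean-field symmetry within each group makes these the natural objects. The first step is to establish tightness of the family $\{\mu_i^N(t)\}$ and to identify the possible limit points, invoking the machinery of \cite{RS1} and \cite{BRS}, adapted from the closed mean-field system treated there to the present open one. The two new features of the open network, the external Poisson inflows $v_i$ and the exit probabilities $p_{i\infty}$, enter the generator only as additional, easily controlled terms, so the bulk of the estimates carries over.

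The heart of the argument is the Poissonization of arrivals. Fix one server of type $i$. Its arrivals consist of the external Poisson flow of rate $v_i$ together with, for each $j$, the customers departing group $j$ that are routed to type $i$ (with probability $p_{ji}$) and then land on this particular server (with probability $1/N$). Thus the arrival process is a superposition of one genuine Poisson component and $m$ components, each obtained by thinning the aggregate departure flow of a group by the factor $1/N$. Since the aggregate departure rate of group $j$ is of order $N$, each thinned component has an order-one rate but is carried by $N$ asymptotically sparse contributions, which is exactly the regime of the Grigelionis--Franken limit theorem for superpositions of rare point processes. Computing the intensity of the limit gives $v_i+\sum_j b_j(t)p_{ji}=\lambda_i(t)$, which is (\ref{006}); and independence of the arrival streams at any finite collection of servers (assertion 1) follows because, under uniform random routing, two distinct tagged servers share only a vanishing fraction of their source customers.

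The main obstacle, and the place where properties 4 and 5 are used, is that the per-group departure flows are not a priori superpositions of \emph{independent} sparse streams: the departure processes of different servers are coupled through the shared routing, so the hypotheses of the Poisson limit theorem do not hold automatically. I would resolve this exactly as in the Poisson-Hypothesis programme of \cite{RS1}, by establishing propagation of chaos: by exchangeability within each group together with a law-of-large-numbers estimate, $\mu_j^N(t)$ concentrates on a deterministic measure $\mu_j(t)$, and conditionally on this empirical state the tagged server's arrivals become an independent thinning, so the Grigelionis--Franken hypotheses are recovered. The regularity and uniform-boundedness conditions 4 and 5 on the residual-service densities $p_{i,\tau}(0)$ and $\tfrac{d}{d\tau}p_{i,\tau}(0)$ are precisely what make the map from the arrival-rate function to the induced departure-rate function $b_i(t)$ well-defined and smooth enough to close the self-consistency relation (\ref{006}); and because the whole statement is local in time and rests only on this decorrelation mechanism, it requires no assumption on the initial state and no appeal to the underload relation (\ref{08}).
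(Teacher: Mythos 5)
Your proposal is correct and follows essentially the same route as the paper: the paper's own proof of this theorem is just a two-sentence appeal to the mean-field technique of \cite{BRS} (for continuous service times) and to Appendix I of \cite{RS2} (for discrete ones), and that technique is precisely what you reconstruct --- tightness and convergence of the empirical measures to the NLMP, propagation of chaos within each group, and a Poisson limit theorem for superpositions of $1/N$-thinned departure flows, with conditions 4 and 5 ensuring the exit-rate functions $b_{i}\left( t\right)$ are well defined. The only difference is one of explicitness, not of method: you spell out the argument that the paper delegates to its references.
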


\noindent The above result is easily obtained by the technique of the paper
\cite{BRS} in the case of continuous distributions $\eta_{i}$. (In fact, our
situation is even simpler, since in \cite{BRS} we deal with the network where
the servers can exchange their positions.) The easier case of discrete random
variables $\eta_{i}$ can be treated along the lines of Appendix I of
\cite{RS2}.

\subsubsection{\label{sph} Strong PH}

The Strong Poisson Hypothesis, which is formulated below, is the statement
about the asymptotic independence of our network from its initial state. The
Strong Poisson Hypothesis means the validity of the following two theorems:

\begin{theorem}
\label{FSHP} Suppose the underload relation $\left(  \ref{08}\right)  $ holds.
There exist values $\hat{\lambda}_{i},\hat{b}_{i},$ depending only on the
rates $v_{i},$ service times $\eta_{i}$ and the matrix $P,$ such that for any
initial state $\varkappa=\otimes\varkappa_{i}$ of our network the limiting
behavior of the functions $\lambda_{i}\left(  t\right)  ,$ $b_{i}\left(
t\right)  $ as $t\rightarrow\infty,$ does not depend on the initial state
$\varkappa$ of the system, and moreover
\begin{equation}
\lambda_{i}\left(  t\right)  \rightarrow\hat{\lambda}_{i},\ b_{i}\left(
t\right)  \rightarrow\hat{b}_{i}\text{ as }t\rightarrow\infty. \label{004}%
\end{equation}

\end{theorem}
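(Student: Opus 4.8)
The first step is to pin down the only possible limit. In a stationary regime all rate functions are constant, $\lambda_{i}(t)\equiv\hat{\lambda}_{i}$ and $b_{i}(t)\equiv\hat{b}_{i}$. A single server of type $i$ fed by a homogeneous Poisson flow of rate $\hat{\lambda}_{i}$ with service time $\eta_{i}$ is, in the $N\rightarrow\infty$ picture of the Weak PH, a stable single-server queue precisely when $\hat{\lambda}_{i}\,\mathbb{E}(\eta_{i})<1$; and a stable FIFO queue has throughput equal to its arrival rate, so its output rate equals its input rate, $\hat{b}_{i}=\hat{\lambda}_{i}$. Substituting this into the consistency relation $(\ref{006})$ gives $\hat{\lambda}_{i}=v_{i}+\sum_{j}\hat{\lambda}_{j}p_{ji}$, i.e. the row vector $\hat{\lambda}$ solves $(\ref{09})$; by the remark following $(\ref{11})$ the unique solution is $\hat{\lambda}=\bar{V}$. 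Thus the only candidate is $\hat{\lambda}_{i}=\hat{b}_{i}=\bar{V}_{i}$, the underload relation $(\ref{08})$ is exactly the per-node stability condition, and classical $M/GI/1$ theory produces a genuine stationary state $\mu_{i}^{\ast}$ realizing it. So existence of the limiting values is settled, and what remains is to prove convergence and its independence of $\varkappa$.

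Next I would fix an arbitrary product initial state $\varkappa=\otimes\varkappa_{i}$ and obtain a priori control. Regarding the limiting dynamics as $m$ coupled queues of type $M_{t}/GI/1$, with input rate $\lambda_{i}(t)=v_{i}+\sum_{j}b_{j}(t)p_{ji}$ and $b_{j}(t)$ the service-completion rate at node $j$, one first observes that the $m\times m$ block of $P$ is irreducible and substochastic with $\rho(P)<1$ (guaranteed by $(\ref{10})$, since all $p_{ij}>0$ and some $p_{i\infty}>0$). Hence $\sum_{k}P^{k}$ converges, the expected sojourn of a customer is finite, and a Lyapunov/fluid estimate keeps $\lambda_{i}(t),b_{i}(t)$ bounded and the queues from exploding. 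I would then invoke the single-node relaxation tool from \cite{RS1}: for a node driven by a fixed bounded input rate function converging to a constant, the subcritical queue empties infinitely often, and at each emptying epoch two copies sharing the input but started from different content can be coupled, so the output forgets the initial state, $|b_{i}(t)-b_{i}^{\ast}(t)|\rightarrow0$.

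To close the loop I would exploit contraction through the network. The map sending the output profile $b(\cdot)$ to the input profile $\lambda(\cdot)=V+bP$ is linear with the substochastic operator $P$; because $\rho(P)<1$, any discrepancy in the outputs is damped as it circulates around the routing cycles. Combining this with the single-node relaxation yields a genuine contraction for the full coupled system: couple the trajectory started from $\varkappa$ with the stationary trajectory started from $\otimes\mu_{i}^{\ast}$ and propagate the coupling through the finite graph, using $\sum_{k}P^{k}<\infty$ to flush out the customers initially present and $\rho(P)<1$ to prevent their discrepancies from being recycled. This forces $\lambda_{i}(t)\rightarrow\bar{V}_{i}$ and $b_{i}(t)\rightarrow\bar{V}_{i}$ for every $\varkappa$, which is $(\ref{004})$ with $\hat{\lambda}_{i}=\hat{b}_{i}=\bar{V}_{i}$.

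The main obstacle is the circularity between the two halves of the argument: the single-node memory-loss presupposes a prescribed, nearly constant input rate, yet each $\lambda_{i}(t)$ is itself assembled from the not-yet-controlled outputs $b_{j}(t)$. The honest resolution is a bootstrap in which the coupling error obeys a self-improving estimate, the contraction factor coming from $\rho(P)<1$ (and the finite expected number of returns encoded in $\sum_{k}P^{k}$) dominating the error that the queues reintroduce per relaxation cycle. Making this quantitative — matching the time scale over which nodes relax against the rate at which $P$ damps the discrepancies — is where the real work lies, and it is the step most dependent on the fine estimates of \cite{RS1} for the dependence of a node's output flow on its input flow and its initial data.
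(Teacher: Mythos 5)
Your first paragraph is fine as far as it goes: identifying the only possible limit as $\hat{\lambda}_{i}=\hat{b}_{i}=\bar{V}_{i}$ via stationarity in $(\ref{006})$ and uniqueness of the solution of $(\ref{09})$ agrees with what is implicit in the paper. But the theorem \emph{is} the convergence statement, and there your proposal has a genuine gap, which you yourself name in your last paragraph: the single-node relaxation tool you want to import from \cite{RS1} presupposes control of the input $\lambda_{i}(\cdot)$, while $\lambda_{i}$ is assembled from the not-yet-controlled outputs $b_{j}$. You propose to break this circularity by a quantitative bootstrap (``matching the time scale over which nodes relax against the rate at which $P$ damps the discrepancies'') and explicitly leave it unexecuted. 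That unexecuted step is the entire content of the theorem; everything before it is routine. In addition, the emptying-epoch coupling you invoke is the mechanism of \cite{RS1} for \emph{closed} systems, and nothing in your sketch shows the queues empty infinitely often before you already know the inputs stay subcritical -- another face of the same circularity.

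The paper closes the loop without any coupling or bootstrap, by an oscillation argument that never requires the inputs to have converged. From relation $(\ref{005})$ of \cite{RS1}, the output $b_{i}(t)$ is an averaging of $\lambda_{i}(\cdot)$ over a window $\left[ t-K(t),t\right]$ with $t-K(t)\rightarrow\infty$; this window property $(\ref{0024})$ is exactly where the underload condition $(\ref{08})$ enters, via domination of the random variable underlying the kernel $q_{\bar{\lambda}_{i},t}$ by a compound-Poisson sum of service times, which gives $(\ref{15})$. Averaging over a receding window yields $(\ref{31})$ and $(\ref{32})$, i.e.\ the oscillation vector $\mathcal{B}=\left\{ b_{j}^{+}-b_{j}^{-}\right\}$ is dominated coordinate-wise by $\mathcal{L}=\left\{ \lambda_{j}^{+}-\lambda_{j}^{-}\right\}$. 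Taking $\limsup$ and $\liminf$ in the balance relation $(\ref{006})$ gives the reverse inequality $\mathcal{L}\leq\mathcal{B}P$, hence $\mathcal{L}\leq\mathcal{L}P^{n}$ for all $n$, and the contraction $(\ref{12})$, which follows from $(\ref{10})$, forces $\mathcal{L}=0$. Thus both limits exist, $\hat{b}_{i}=\hat{\lambda}_{i}$, and the limit solves $(\ref{09})$, hence equals $\bar{V}$ independently of $\varkappa$. Note also that in your sketch the underload hypothesis is used only to stabilize a fictitious stationary queue, whereas its real role is to furnish the kernel tightness $(\ref{0024})$; without that, even the single-node comparison $(\ref{32})$ fails.
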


Let $\nu_{i}$ be the stationary distribution of the stationary Markov process
on a single server $\mathcal{N}_{i},$ corresponding to the stationary Poisson
input flow with constant rate $\hat{\lambda}_{i},$ and service time $\eta
_{i}.$ (Such a distribution exists provided the server $\mathcal{N}_{i}$ is
not overloaded.) Define $\hat{\pi}_{N}$ to be the product state on
$\Omega^{Nm}:$%
\[
\hat{\pi}_{N}=\prod_{i=1}^{m}\underset{N}{\underbrace{\nu_{i}\otimes
...\otimes\nu_{i}}}.
\]

\begin{theorem}
The set of limit points of the family $\pi_{N}$ contains at most one point,
which coincides with the limit $\lim_{N\rightarrow\infty}\hat{\pi}_{N}.$
\end{theorem}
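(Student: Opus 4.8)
The plan is to establish that every limit point of $\{\pi_N\}$ equals $\lim_{N\to\infty}\hat\pi_N$, which simultaneously proves there is at most one limit point and identifies it. First I would set up the single-server picture: for each fixed node $i$, the stationary distribution $\pi_N$ projected onto a single type-$i$ server should, in the $N\to\infty$ limit, be governed by a Poisson input of constant rate $\hat\lambda_i$ (the value supplied by Theorem \ref{FSHP}) with service time $\eta_i$. The underload relation (\ref{08}) guarantees $\mathbb{E}(\eta_i)\hat\lambda_i<1$, so the single-server M/G/1-type process is positive recurrent and its stationary law $\nu_i$ exists; this is exactly the measure appearing in the definition of $\hat\pi_N$. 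So the target measure $\lim\hat\pi_N$ is well defined.

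Next I would exploit the two-sided relationship between the finite-$N$ stationary distribution and the NLMP dynamics. The key structural fact is the weak PH (the first Theorem): under $\pi_N$, as $N\to\infty$, the inflows to distinct servers decouple and become Poisson with rate functions $\lambda_i(t)$, while the outflows have rates $b_i(t)$ tied together by (\ref{006}). The plan is to argue that if one starts the $N$-server chain in its stationary state $\pi_N$ and passes to the limit, one obtains a \emph{time-stationary} solution of the NLMP whose flow rates $\lambda_i(t),b_i(t)$ must therefore be constant in $t$. By Theorem \ref{FSHP}, the only constant-in-time flow profile reachable under the dynamics is $(\hat\lambda_i,\hat b_i)$, since \emph{every} trajectory converges to it regardless of initial state; a stationary trajectory is in particular one that does not move, so it must already sit at the fixed point $(\hat\lambda_i,\hat b_i)$. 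Identifying the constant rates forces the single-server marginals to be exactly $\nu_i$, and the asymptotic independence from weak PH upgrades this to the full product form $\prod_i \nu_i^{\otimes N}$, i.e.\ the limit of $\hat\pi_N$.

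The two ingredients I would treat most carefully are the interchange of limits and the tightness needed to extract genuine limit points. For the limit interchange, I would show that the empirical distribution of queue states under $\pi_N$ converges (along any subsequence) to a measure that is invariant for the limiting NLMP, combining propagation-of-chaos-type estimates from \cite{BRS} with the stationarity of $\pi_N$; the underload condition is what keeps queues from escaping to infinity and supplies the tightness on $\Omega^{Nm}$. A clean way to organize this is: (i) fix a test function depending on finitely many servers; (ii) write the stationarity identity $\int \mathcal{L}_N f\,d\pi_N=0$ for the $N$-server generator $\mathcal{L}_N$; (iii) pass to the limit using weak PH to replace $\mathcal{L}_N$ by the NLMP generator and $\pi_N$ by its limit $\mu_\infty=\otimes_i\mu_i^\infty$; (iv) conclude each $\mu_i^\infty$ is stationary for the single-server process driven by constant rate $\hat\lambda_i$, hence equals $\nu_i$.

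The main obstacle I expect is step (iii): justifying that the NLMP admits a \emph{unique} time-stationary regime and that this regime is precisely the fixed point of Theorem \ref{FSHP}. A priori the NLMP could possess stationary measures not attracting generic trajectories, so the ``every trajectory converges to $(\hat\lambda_i,\hat b_i)$'' statement of Theorem \ref{FSHP} must be leveraged to rule out alternative stationary flow profiles — the cleanest route is to observe that a stationary measure, viewed as a stationary trajectory, is its own limit as $t\to\infty$, and Theorem \ref{FSHP} pins that limit down to the single value $(\hat\lambda_i,\hat b_i)$ independently of where the trajectory started. Making this rigorous requires knowing the convergence in Theorem \ref{FSHP} holds for the (possibly non-product) initial data arising as limits of $\pi_N$, which is where I would need the full strength of the contraction/attraction estimates rather than merely convergence from product initial states.
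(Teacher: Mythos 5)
Your proposal is correct and follows essentially the same route as the paper: the paper's combined proof devotes its effort to establishing the flattening relation of Theorem \ref{FSHP}, and then identifies any limit point of $\pi_{N}$ exactly as you do — via weak PH it must be a time-stationary state of the NLMP, whose constant rates are forced by the attraction property to equal $\hat{\lambda}_{i},\hat{b}_{i}$, so the state is $\prod_{i}\nu_{i}$. The tightness, generator-stationarity, and limit-interchange steps you flag are precisely what the paper leaves implicit (deferring to \cite{BRS} for the mean-field convergence and to \cite{FR} for existence of the $\pi_{N}$), so your write-up is a more explicit rendering of the same argument rather than a different one.
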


Before giving the proof of the theorems we will explain why, in contrast with
the closed network case, we do not need to impose any condition on the initial
state of our network, when it is open. It is based on the following statement.

\begin{claim}
Let $\eta$ and $\xi_{i}$, $i=1,2,...$ be independent random variables, and
$\eta$ be an integer random variable. Then%
\[
\Xi=\sum_{i=1}^{\eta}\xi_{i}%
\]
also is a random variable. In particular, for every $\varepsilon>0$ there
exists a value $T\left(  \varepsilon\right)  <\infty$ such that
\[
\mathbf{\Pr}\left(  \Xi>T\left(  \varepsilon\right)  \right)  <\varepsilon.
\]

\end{claim}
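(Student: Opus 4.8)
The plan is to prove two separate assertions: first, that $\Xi$ is measurable and \emph{almost surely finite} (so that it is a genuine random variable, rather than one taking the value $+\infty$ with positive probability), and second, to deduce the tail estimate from almost-sure finiteness. First I would write $\Xi$ explicitly as
\[
\Xi = \sum_{n=0}^{\infty} \mathbf{1}_{\{\eta = n\}} \sum_{i=1}^{n} \xi_i,
\]
which exhibits it as a countable sum of measurable functions and settles measurability at once.

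The crucial step is then to show $\mathbf{\Pr}(\Xi = \infty) = 0$. Since $\eta$ is an integer-valued random variable we have $\mathbf{\Pr}(\eta < \infty) = 1$. Conditioning on the value of $\eta$ and using the independence of $\eta$ from the family $\{\xi_i\}$, I would write
\[
\mathbf{\Pr}(\Xi = \infty) = \sum_{n=0}^{\infty} \mathbf{\Pr}(\eta = n)\,\mathbf{\Pr}\!\left( \sum_{i=1}^{n} \xi_i = \infty \right).
\]
For each finite $n$ the inner sum is a finite sum of almost surely finite random variables, so $\mathbf{\Pr}(\sum_{i=1}^{n} \xi_i = \infty) = 0$; since also $\mathbf{\Pr}(\eta = \infty) = 0$, every term vanishes and $\Xi$ is finite almost surely.

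Finally, the tail bound follows from continuity of measure. The events $\{\Xi > T\}$ decrease, as $T \uparrow \infty$, to $\bigcap_{T} \{\Xi > T\} = \{\Xi = \infty\}$, whence
\[
\lim_{T \to \infty} \mathbf{\Pr}(\Xi > T) = \mathbf{\Pr}(\Xi = \infty) = 0.
\]
Thus for each $\varepsilon > 0$ there is a finite $T(\varepsilon)$ with $\mathbf{\Pr}(\Xi > T(\varepsilon)) < \varepsilon$, as claimed.

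I expect no serious obstacle here; the statement is an elementary fact of measure theory, and the whole content lies in the almost-sure finiteness of the random sum. The only point requiring a little care is that $\eta$ may take arbitrarily large values, which is precisely why I phrase the tail estimate through the decreasing limit of the events $\{\Xi > T\}$ rather than through a uniform truncation of $\eta$. In the application the $\xi_i$ are nonnegative service times, in which case one may alternatively pick $N$ with $\mathbf{\Pr}(\eta > N) < \varepsilon/2$ and then $T$ with $\mathbf{\Pr}(\sum_{i=1}^{N} \xi_i > T) < \varepsilon/2$, using $\Xi \le \sum_{i=1}^{N} \xi_i$ on the event $\{\eta \le N\}$.
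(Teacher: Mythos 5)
Your proof is correct; the paper itself offers no argument at all (its proof reads simply ``Trivial''), and your write-up is exactly the standard elementary argument the authors had in mind: measurability via the decomposition over $\{\eta=n\}$, almost-sure finiteness from $\mathbf{\Pr}(\eta<\infty)=1$, and the tail bound by continuity of measure along the decreasing events $\{\Xi>T\}$. Nothing further is needed.
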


\begin{proof}
Trivial.
\end{proof}

That claim explains that no matter what initial condition is chosen for our
network, after some time, depending on the condition, it is almost forgotten.
Now we will give the proof of the above theorems.

\begin{proof}
\medskip The flattening relation $\left(  \ref{004}\right)  $ for the rates
$\lambda_{i}\left(  t\right)  ,\ b_{i}\left(  t\right)  $ is proven in the
same way as in \cite{RS1}. First we recall the basic relation (26) of
\cite{RS1} between (arbitrary) input rate $\bar{\lambda}_{i}\left(  t\right)
$ and the corresponding output rate $\bar{b}_{i}\left(  t\right)  $ of the
server $\mathcal{N}_{i}:$
\begin{equation}
\bar{b}_{i}\left(  t\right)  =\left(  1-\varepsilon\left(  t\right)  \right)
\left[  \bar{\lambda}_{i}\ast q_{\bar{\lambda}_{i},t}\right]  \left(
t\right)  +\varepsilon\left(  t\right)  Q\left(  t\right)  . \label{005}%
\end{equation}
Here the functions $\varepsilon\left(  t\right)  ,$ $Q\left(  t\right)  $ and
the family $q_{\bar{\lambda}_{i},t}$ of probability densities are functionals
of the initial state of $\mathcal{N}_{i}$ and of the rate function
$\bar{\lambda}_{i};$ the function $Q\left(  t\right)  $ is uniformly bounded
in $t,$ the function $\varepsilon\left(  t\right)  $ goes to zero as
$t\rightarrow\infty,$ while each of the densities $q_{\bar{\lambda}_{i}%
,t}\left(  \cdot\right)  $ has its support on positive semi-axis and depends
on the function $\bar{\lambda}_{i}\left(  \tau\right)  $ only via its
restriction to $\left\{  \tau\leq t\right\}  .$ For more details the reader
can go to \cite{RS1}, Sect. 8.

To apply the relation $\left(  \ref{005}\right)  $ we need to have some
information on the regularity properties of the kernels $q_{\bar{\lambda}%
_{i},t}.$ In the situation of the closed systems, studied in \cite{RS1}, we
were using the uniform compactness of the family $\left\{  q_{\bar{\lambda
}_{i},t},t>0\right\}  :$ the integrals%
\begin{equation}
\int_{0}^{K}q_{\bar{\lambda}_{i},t}\left(  \tau\right)  ~d\tau\rightarrow1
\label{0023}%
\end{equation}
as $K\rightarrow\infty,$ \textit{uniformly in} $t.$ In the present situation
it will be sufficient for our purposes to establish the following weaker
property: there exists a function $K\left(  t\right)  ,$ such that%
\begin{equation}
\int_{0}^{K\left(  t\right)  }q_{\bar{\lambda}_{i},t}\left(  \tau\right)
~d\tau\rightarrow1,\text{with }t-K\left(  t\right)  \rightarrow\infty\text{ as
}t\rightarrow\infty. \label{0024}%
\end{equation}
We will show that the above property indeed holds, provided our network is
underloaded, see $\left(  \ref{07}\right)  $ or $\left(  \ref{08}\right)  $ below.

To get the relation $\left(  \ref{0024}\right)  $ we need to revert to the
definition of the densities $q_{\bar{\lambda}_{i},t},$ which is used in the
course of the proof of the Theorem 3 of \cite{RS1}. It is quite complicated,
but we need only some properties of it. According to this definition,
$q_{\bar{\lambda}_{i},t}$ is the density of a certain random variable
$\xi_{\bar{\lambda}_{i},t}\geq0,$ having the following property:

Let a realization $\left\{  x_{1},x_{2},...,x_{n}\right\}  \subset\left[
0,t\right]  $ of the Poisson random field with rate function $\bar{\lambda
}_{i}\left(  \tau\right)  ,$ $\tau\in\left[  0,t\right]  ,$ as well as the
(unordered) sequence $\left\{  l_{1},l_{2},...,l_{n+1}\right\}  $ are given,
where $l_{k}$ are iid random variables with distribution $\eta_{i}.$
\textit{Under this condition} the random variable $\xi_{\bar{\lambda}_{i},t}$
takes its values in the set
\[
L\left(  l_{1},l_{2},...,l_{n+1}\right)  =\left\{  \sum_{k\in A}l_{k}%
:A\subset\left\{  1,2,...,n\mathbf{+}1\right\}  ,A\neq\emptyset\right\}
\subset\mathbb{R}^{1},
\]
with probabilities, depending on the sample $\left\{  x_{1},x_{2}%
,...,x_{n}\right\}  .$ Therefore the (unconditioned) random variable
$\xi_{\bar{\lambda}_{i},t}$ is dominated from above by the random variable
\[
\sum_{k=1}^{\chi_{\bar{\lambda}_{i},t}+1}l_{k},
\]
where $l_{k}$ are iid random variables with distribution $\eta_{i},$ while
$\chi_{\bar{\lambda}_{i},t}$ is a Poisson random variable with parameter
$\int_{0}^{t}\bar{\lambda}_{i}\left(  \tau\right)  ~d\tau.$ So to get $\left(
\ref{0024}\right)  $ it is enough to show that under the conditions $\left(
\ref{07}\right)  $ or $\left(  \ref{08}\right)  $ below we have that for all
$t$
\begin{equation}
\mathbb{E}\left(  \chi_{\lambda_{i},t}\right)  \leq c\frac{t}{\mathbb{E}%
\left(  \eta_{i}\right)  }, \label{15}%
\end{equation}
with $c<1.$

Let us derive the \textquotedblleft flattening\textquotedblright\ relations
$\left(  \ref{004}\right)  ,$ assuming $\left(  \ref{0024}\right)  $ and
$\left(  \ref{005}\right)  .$ Applying $\left(  \ref{005}\right)  $ to rate
functions $\lambda_{j}\left(  t\right)  ,$ $b_{j}\left(  t\right)  ,$ and
using $\left(  \ref{0024}\right)  $ we obtain the relations%
\begin{equation}
b_{j}^{+}=\limsup_{t\rightarrow\infty}b_{j}\left(  t\right)  \leq\lambda
_{j}^{+}=\limsup_{t\rightarrow\infty}\lambda_{j}\left(  t\right)  , \label{31}%
\end{equation}%
\begin{equation}
b_{j}^{-}=\liminf_{t\rightarrow\infty}b_{j}\left(  t\right)  \geq\lambda
_{j}^{-}=\liminf_{t\rightarrow\infty}\lambda_{j}\left(  t\right)  . \label{32}%
\end{equation}
(The property $\left(  \ref{0024}\right)  $ is needed only for $\left(
\ref{32}\right)  .$) Indeed, the relation $\left(  \ref{005}\right)  $ is
telling us that the function $b\left(  t\right)  $ is the result of averaging
of $\lambda\left(  \cdot\right)  $ over a segment $\left[  t-K\left(
t\right)  ,t\right]  $ with some probabilistic kernel, while the left-end
point of the segment $t-K\left(  t\right)  \rightarrow\infty,$ as
$t\rightarrow\infty.$

Introducing now the vectors $\mathcal{L}=\left\{  \lambda_{j}^{+}-\lambda
_{j}^{-}\right\}  ,$ $\mathcal{B}=\left\{  b_{j}^{+}-b_{j}^{-}\right\}  ,$
$j=1,...,m,$ we have that $\mathcal{L}\geq\mathcal{B}$ coordinate-wice.
Applying $\limsup_{t\rightarrow\infty}$ and $\liminf_{t\rightarrow\infty}$ to
both sides of $\left(  \ref{006}\right)  $ we get
\[
\mathcal{L}\leq\mathcal{B}P.
\]
Therefore
\[
\mathcal{L}\leq\mathcal{L}P,
\]
and so for all $n>0$
\[
\mathcal{L}\leq\mathcal{L}P^{n}.
\]
Due to condition $\left(  \ref{10}\right)  ,$ for every $x=\left(
x_{1},...,x_{m}\right)  $ with $x_{i}\geq0$
\begin{equation}
\left\vert \left\vert xP^{n}\right\vert \right\vert _{L^{1}}\leq c\left\vert
\left\vert x\right\vert \right\vert _{L^{1}} \label{12}%
\end{equation}
for all $n\geq m$ with some $c<1.$ Thus, we conclude that $\mathcal{L}=0.$
That proves $\left(  \ref{004}\right)  .$

To have the existence of the stationary measures and their convergence, as
well as the regularity $\left(  \ref{0024}\right)  ,$ we need the system to be
underloaded. In case when we have just one type of servers, i.e. $i=1,$ this
condition is simply the requirement that%
\begin{equation}
\mathbb{E}\left(  \eta_{1}\right)  \frac{v_{1}}{1-p}<1. \label{07}%
\end{equation}
Here $p=p_{11}<1$ is the probability that the client will return back to the
server after being served. The ratio $\frac{v_{1}}{1-p}\equiv v_{1}%
+v_{1}p+v_{1}p^{2}+...$ has the following meaning: it is the mean number of
the customers who will visit a given server $\mathcal{N}_{1}$ and who are the
descendants of clients who first entered the network during the unit time
interval $\left[  0,1\right]  .$ (The flow of these customers is not a Poisson
flow.) Note that some of these customers visit $\mathcal{N}_{1}$ only very
late in time. Therefore the condition $\left(  \ref{07}\right)  $ implies that
the average number of clients per given server times the average service time
is less than $1,$ since it is bounded from above by $\mathbb{E}\left(
\eta_{1}\right)  \frac{v_{1}}{1-p}.$ Therefore $\left(  \ref{0024}\right)  $ holds.

The existence of the stationary measures under $\left(  \ref{08}\right)  $ is
a result of the paper \cite{FR}. Let us derive the relation $\left(
\ref{15}\right)  .$ It is almost immediate after what was said in the two
preceding paragraphs. Indeed, the expectation $\mathbb{E}\left(  \chi
_{\lambda_{i},t}\right)  $ is nothing else as the mean value of the customers
visiting a given server $\mathcal{N}_{i}$ during the time interval $\left[
0,t\right]  .$ (Moreover, now they even form a Poisson flow!) As was explained
above, the estimate $\mathbb{E}\left(  \chi_{\lambda_{i},t}\right)  \leq
\bar{V}_{i}t$ holds. (In fact, the inequality is strict, since some customers
will come to the service much later than $t.$) Together with $\left(
\ref{08}\right)  $ it implies $\left(  \ref{15}\right)  .$
\end{proof}

\subsection{Infinite number of server groups}

In this subsection we explain the changes needed in order to extend the
results of the previous Section to the case of countably many servers. As
above, it will be built on the Markov chain $\mathfrak{M},$ which now will be countable.

\subsubsection{Markov chain}

We will need some condition of transience of $\mathfrak{M},$ analogous to
$\left(  \ref{10}\right)  .$ The condition $\left(  \ref{10}\right)  $ was
used to derive the relation $\left(  \ref{12}\right)  ,$ which means that the
only invariant measure of our chain is zero measure. The proper analog of it
in the present context follows. It turns out to be the condition of vanishing
of $L^{\infty}$-invariant measures (they do not need to be probability
measures). The corresponding sufficient conditions on the transition matrix
are given by Theorems \ref{T01}, \ref{T04} or \ref{T05} below.

Let $\mathfrak{M}$ be a countable irreducible Markov chain with the states
$i=1,2,...\cup\infty.$ Let $P=\left\{  p_{ij}\geq0\right\}  $ be the
transition matrix, $\sum_{j}p_{ij}\leq1,$ while
\[
p_{i\infty}=1-\sum_{j}p_{ij}\geq0
\]
be the probabilities to go from $i$ to $\infty.$ The state $\infty$ is absorbing.

The measure $\lambda=\left\{  \lambda\left(  i\right)  \geq0\right\}  $ is
called $L^{\infty}$-measure, if $\lambda\left(  i\right)  \leq C,$ for some
$C>0$, uniformly in $i.$ The measure $\lambda$ is called invariant for the
Markov chain $\mathfrak{M}$, if
\[
\lambda=\lambda P.
\]
i.e. if $\lambda\left(  j\right)  =\sum_{i}\lambda\left(  i\right)  p_{ij}.$
For example, the measure $\lambda=0$ is invariant.

The first such transience condition was obtained in \cite{KKR}, and it applies
only to double semi-stochastic matrices $P.$ We recall that $P$ is called
double semi-stochastic, if for all $j$ we have
\[
\sum_{i}p_{ij}\leq1.
\]
In this case there exists one special -- `maximal' -- invariant measure
$\lambda^{\ast}$ of $\mathfrak{M.}$ It is constructed as follows. Consider the
dual Markov chain $\mathfrak{M}^{\ast}$ on $1,2,...\cup~\infty,$ with
transition probabilities $p_{ij}^{\ast}=p_{ji},$ and with
\[
p_{i\infty}^{\ast}=1-\sum_{j}p_{ij}^{\ast}\geq0,\ \ p_{\infty\infty}^{\ast
}=1,
\]
i.e. $\infty$ is the absorbing state. Define the measure $\lambda^{\ast}$ by
\begin{equation}
\lambda^{\ast}\left(  i\right)  =\mathbf{\Pr}\left\{  \text{the chain
}\mathfrak{M}^{\ast},\text{ started from }i,\text{ never gets to }%
\infty\right\}  . \label{42}%
\end{equation}
Clearly, $\lambda^{\ast}\left(  i\right)  =\sum_{j}p_{ij}^{\ast}\lambda^{\ast
}\left(  j\right)  \equiv\sum_{j}\lambda^{\ast}\left(  j\right)  p_{ji},$ so
$\lambda^{\ast}$ is an invariant measure for $\mathfrak{M}$.

The following theorem gives sufficient condition for the zero measure to be
the only invariant measure.

\begin{theorem}
\label{T01} (see \cite{KKR}.) Suppose that the transition matrix $P$ of the
irreducible Markov chain $\mathfrak{M}$ is double semi-stochastic. Then the
following two properties are equivalent:

1. the invariant measure $\lambda^{\ast}$ $\left(  \ref{42}\right)  $ of the
Markov chain $\mathfrak{M}$ is zero.

2. zero measure is the only invariant $L^{\infty}$-measure of $\mathfrak{M}$.
\end{theorem}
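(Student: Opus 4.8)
The plan is to establish the equivalence by proving both implications, and I expect the implication $1 \Rightarrow 2$ (that vanishing of $\lambda^\ast$ forces \emph{every} bounded invariant measure to vanish) to be the substantive direction, while $2 \Rightarrow 1$ is nearly immediate. For $2 \Rightarrow 1$ I would simply observe that $\lambda^\ast$ is itself an invariant measure (as shown in the text just before the statement) and is bounded: indeed $\lambda^\ast(i)$ is a probability, hence $\lambda^\ast(i) \le 1$ for all $i$, so it is an $L^\infty$-measure. If the only invariant $L^\infty$-measure is zero, then in particular $\lambda^\ast = 0$.

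For the main direction $1 \Rightarrow 2$, suppose $\lambda^\ast \equiv 0$ and let $\lambda$ be an arbitrary invariant $L^\infty$-measure with $0 \le \lambda(i) \le C$. First I would reduce to the case $C = 1$ by rescaling, so that $\lambda(i) \le 1 = \lambda^\ast_{\text{trivial}}$ would be comparable to a probabilistic quantity. The key idea is to give $\lambda(i)$ a probabilistic interpretation dual to the one defining $\lambda^\ast$. Since $\lambda = \lambda P$ means $\lambda(j) = \sum_i \lambda(i) p_{ij}$, iterating gives $\lambda(j) = \sum_i \lambda(i) (P^n)_{ij} = \sum_i \lambda(i) \mathbf{\Pr}\{\mathfrak{M}^\ast \text{ goes } j \to i \text{ in } n \text{ steps}\}$. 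I would interpret this as saying that $\lambda(j)$ equals the expectation of $\lambda$ evaluated at the position, after $n$ steps, of the dual chain $\mathfrak{M}^\ast$ started at $j$, \emph{restricted to the event that the chain has not been absorbed at} $\infty$. Because $\mathfrak{M}^\ast$ is dual, the probability that it survives $n$ steps without absorption tends, as $n \to \infty$, to $\lambda^\ast(j)$, the probability of never being absorbed.

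Concretely, I would write $\lambda(j) = \mathbb{E}_j^\ast\big[\lambda(X_n)\,\mathbf{1}\{X_n \ne \infty\}\big]$ for the dual chain $(X_n)$ started at $j$, where I extend $\lambda(\infty) = 0$. Using the uniform bound $\lambda \le C$, this gives $\lambda(j) \le C \cdot \mathbf{\Pr}_j^\ast\{X_n \ne \infty\}$ for every $n$. Letting $n \to \infty$, the right-hand side converges to $C\,\lambda^\ast(j)$, and since $\lambda^\ast(j) = 0$ by hypothesis, we conclude $\lambda(j) = 0$ for every $j$. The hard part, and the step I would scrutinize most carefully, is justifying the passage to the limit and the monotone behaviour of $\mathbf{\Pr}_j^\ast\{X_n \ne \infty\}$: one must verify that the event of non-absorption by time $n$ decreases to the event of never being absorbed, so that $\mathbf{\Pr}_j^\ast\{X_n \ne \infty\} \downarrow \lambda^\ast(j)$, and that the double-semi-stochastic hypothesis $\sum_i p_{ij} \le 1$ is exactly what guarantees the dual transition probabilities $p_{ij}^\ast = p_{ji}$ form a genuine sub-Markov kernel so that $\mathfrak{M}^\ast$ is a well-defined (sub-)stochastic chain with absorbing $\infty$. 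With that monotone convergence in hand the boundedness of $\lambda$ does all the remaining work, and the implication follows.
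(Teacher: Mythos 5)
Your proposal is correct and is essentially the paper's own proof: the paper likewise iterates $\lambda = \lambda P^n$, bounds $\lambda \le C e$ so that $\lambda(j) \le C\,(eP^n)(j)$, and identifies $(eP^n)(j)$ as the dual chain's probability of surviving $n$ steps, which decreases monotonically to $\lambda^\ast(j) = 0$. Your probabilistic phrasing via $\lambda(j) = \mathbb{E}_j^\ast\bigl[\lambda(X_n)\,\mathbf{1}\{X_n \ne \infty\}\bigr]$ is just the same argument in different notation, and your handling of the easy direction $2 \Rightarrow 1$ (boundedness of $\lambda^\ast$ by $1$) matches what the paper leaves implicit.
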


The following proof is simpler than the original one, see \cite{KKR}, and
easily leads to generalizations, which follow.

\begin{proof}
Let us write a formula for the measure $\lambda^{\ast}.$ To this end denote by
$\gamma_{j}^{0}\left(  i\right)  $ the function%
\begin{equation}
\gamma_{j}^{\left(  0\right)  }\left(  i\right)  =\left\{
\begin{array}
[c]{ll}%
1 & \text{ if }i=j\\
0 & \text{ otherwise}%
\end{array}
\right.  , \label{01}%
\end{equation}
and put
\begin{equation}
\gamma_{j}^{\left(  n\right)  }=P\gamma_{j}^{\left(  n-1\right)  }. \label{02}%
\end{equation}
$\gamma$-s are column-vectors, $\gamma_{j}^{\left(  1\right)  }$ being the
$j$-th column of $P.$ The value $\gamma_{j}^{\left(  n\right)  }\left(
i\right)  $ is the probability for $\mathfrak{M}^{\ast},$ started at $j,$ to
be in $i$ after $n$ steps. Therefore the sum $\sum_{i}\gamma_{j}^{\left(
n\right)  }\left(  i\right)  $ is the probability that $\mathfrak{M}^{\ast},$
started at $j,$ is not at $\infty$ after $n$ steps. Evidently,%
\[
\sum_{i}\gamma_{j}^{\left(  n\right)  }\left(  i\right)  =eP^{n}\gamma_{j}%
^{0},
\]
where (the measure) $e$ is given by $e\left(  i\right)  \equiv1.$ The
probabilities $\lambda^{\ast}\left(  j\right)  $ are just the limits
\[
\lambda^{\ast}\left(  j\right)  =\lim_{n\rightarrow\infty}eP^{n}\gamma_{j}%
^{0}.
\]
They exist because for every $j$ the sequence $eP^{n}\gamma_{j}^{0},$
$n=0,1,2,...$ is non-increasing.

Suppose now that $\lambda^{\ast}=0.$ That means that $\lim_{n\rightarrow
\infty}\left(  eP^{n}\right)  \left(  j\right)  =0$ for every $j.$ If $h\geq0$
is an invariant $L^{\infty}$ measure, $h=hP,$ then for some $C$ we have $h\leq
Ce.$ But then evidently $h\left(  j\right)  \leq C\left(  eP^{n}\right)
\left(  j\right)  ,$ so $h$ has to be zero.
\end{proof}

\begin{corollary}
\label{C} Let $P$ satisfies the conditions of the previous theorem, and
$k\geq0$ be any $L^{\infty}\ $measure. Then $kP^{n}\rightarrow0$ weakly, i.e.
$\left(  kP^{n}\right)  \left(  j\right)  \rightarrow0$ for every $j$ (though
not uniformly in $j$).
\end{corollary}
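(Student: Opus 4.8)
The plan is to observe that Corollary~\ref{C} is essentially the same domination argument that closed the proof of Theorem~\ref{T01}, with the invariance of the measure simply dropped. Throughout I take the conditions of the previous theorem to include that we are in the case $\lambda^{\ast}=0$ (equivalently, by Theorem~\ref{T01}, that zero is the only invariant $L^{\infty}$-measure), since otherwise $\lambda^{\ast}$ itself would be a nonzero $L^{\infty}$-measure with $\lambda^{\ast}P^{n}=\lambda^{\ast}\not\to0$ and the conclusion would fail. The single input I would reuse from that proof is the pointwise limit $\left(eP^{n}\right)\left(j\right)\to0$ for every $j$, which holds precisely because $\lambda^{\ast}=0$; recall also that double semi-stochasticity gives $eP\leq e$ coordinatewise, so the sequence $\left(eP^{n}\right)\left(j\right)$ is non-increasing in $n$ and its limit exists without further work.

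First I would fix the $L^{\infty}$-bound: since $k\geq0$ is an $L^{\infty}$-measure there is a constant $C$ with $k\left(i\right)\leq C$ for all $i$, that is, $k\leq Ce$ coordinatewise. The only structural fact needed next is that the entries of $P^{n}$ are nonnegative, so the map $x\mapsto xP^{n}$ on nonnegative row vectors is monotone: if $x\leq y$ coordinatewise then $xP^{n}\leq yP^{n}$. Applying this to $k\leq Ce$ yields, for every $j$,
\[
0\leq\left(kP^{n}\right)\left(j\right)=\sum_{i}k\left(i\right)\left(P^{n}\right)_{ij}\leq C\sum_{i}\left(P^{n}\right)_{ij}=C\left(eP^{n}\right)\left(j\right).
\]
Letting $n\to\infty$ and using $\left(eP^{n}\right)\left(j\right)\to0$, the squeeze gives $\left(kP^{n}\right)\left(j\right)\to0$ for each fixed $j$, which is exactly the asserted weak convergence.

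There is no real obstacle here; the content is entirely contained in the theorem's proof, and the corollary merely records that the invariance hypothesis $h=hP$ was never used in the final comparison $h\left(j\right)\leq C\left(eP^{n}\right)\left(j\right)$. The only point worth a line of comment is the parenthetical non-uniformity: the bound is controlled by $\left(eP^{n}\right)\left(j\right)$, and since $e$ is not summable there is in general no common rate for $\left(eP^{n}\right)\left(j\right)\to0$ across $j$, so one cannot upgrade the conclusion to $\sup_{j}\left(kP^{n}\right)\left(j\right)\to0$. I would simply note this and not attempt any strengthening, as none is claimed.
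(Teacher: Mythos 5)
Your proof is correct and is essentially identical to the paper's: the paper's proof of Corollary~\ref{C} simply points back to the proof of Theorem~\ref{T01}, where the only facts used are $\left(eP^{n}\right)\left(j\right)\rightarrow0$ (valid in the case $\lambda^{\ast}=0$) and the domination $k\leq Ce$ followed by monotonicity of multiplication by the nonnegative matrix $P^{n}$ --- exactly your squeeze argument. Your explicit remark that the hypotheses must be read as including $\lambda^{\ast}=0$ (equivalently, that zero is the only invariant $L^{\infty}$-measure) is a sound clarification of the paper's slightly loose phrasing, not a deviation from its argument.
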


\begin{proof}
The proof is contained in the proof of the theorem \ref{T01}.
\end{proof}

Note that in fact we have proven a stronger result, which generalize the above
theorem to the class of stochastic matrices $P$, which, instead of being
double-stochastic, have their columns summable.

\begin{theorem}
\label{T04} Suppose that $P$ is a stochastic matrix, such that for every $j$
the function $\gamma_{j}^{\left(  n\right)  },$ defined by relations $\left(
\ref{01}\right)  ,$ $\left(  \ref{02}\right)  $ belongs to $L^{1}$ once $n\geq
n_{0}.$ Suppose moreover that the limits%
\[
\lambda^{\ast}\left(  j\right)  =\lim_{n\rightarrow\infty}\left\vert
\left\vert \gamma_{j}^{\left(  n\right)  }\right\vert \right\vert _{L^{1}}%
\]
exist.

If $\lambda^{\ast}\left(  j\right)  =0$ for all $j,$ then for every
$L^{\infty}$ measure $k\geq0$ we have $kP^{n}\rightarrow0$ pointwise. So in
particular zero is the only invariant measure.
\end{theorem}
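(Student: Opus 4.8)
The plan is to reproduce the proof of Theorem~\ref{T01} almost verbatim, the point being that the two facts which double semi-stochasticity supplied there for free --- that each $\gamma_j^{(n)}$ lies in $L^1$ and that the column masses $\|\gamma_j^{(n)}\|_{L^1}$ actually converge --- are now promoted to standing hypotheses, after which the estimate is a one-line domination. First I would recast the quantity to be controlled as a pairing of the row measure $k$ against the column vectors $\gamma_j^{(n)}$. A trivial induction on the relations $\left( \ref{01} \right)$, $\left( \ref{02} \right)$ gives $\gamma_j^{(n)}(i) = (P^n)_{ij}$, so that
$$\left( kP^n \right)\left( j \right) = \sum_i k(i)\,(P^n)_{ij} = \sum_i k(i)\,\gamma_j^{(n)}(i);$$
that is, $(kP^n)(j)$ is exactly the integral of the nonnegative function $\gamma_j^{(n)}$ against the measure $k$.

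Next I would invoke the defining property of an $L^\infty$ measure: there is a $C<\infty$ with $k(i)\le C$ for all $i$, i.e. $k\le Ce$ with $e\equiv 1$. Since every term is nonnegative, a term-by-term bound (no interchange of summation is needed) yields, for each fixed $j$ and all $n\ge n_0$,
$$0 \le \left( kP^n \right)\left( j \right) = \sum_i k(i)\,\gamma_j^{(n)}(i) \le C\sum_i \gamma_j^{(n)}(i) = C\,\bigl\| \gamma_j^{(n)} \bigr\|_{L^1}.$$
By hypothesis the right-hand side tends to $C\lambda^\ast(j)=0$ as $n\to\infty$, which is precisely the asserted pointwise convergence $kP^n\to 0$.

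For the final clause about invariant measures I would take any $L^\infty$ measure $h\ge 0$ with $h=hP$; then $h=hP^n$ for every $n$, so applying the convergence just proved with $k=h$ gives $h(j)=(hP^n)(j)\to 0$ for each $j$, forcing $h\equiv 0$. As for difficulty: there is essentially no analytic obstacle here, the whole content being the domination $k\le Ce$. The only thing that genuinely deserves comment is that the two ingredients borrowed from the double semi-stochastic setting are legitimately in force --- the membership $\gamma_j^{(n)}\in L^1$ (so the pairing against the bounded $k$ is finite) and the existence of $\lambda^\ast(j)$ as a true limit rather than merely a $\limsup$. Both are exactly the assumptions of the theorem, so the monotonicity argument of Theorem~\ref{T01}, which produced them automatically when the columns of $P$ summed to at most $1$, is simply no longer invoked.
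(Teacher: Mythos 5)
Your proof is correct and is essentially identical to the paper's own argument: the paper proves Theorem~\ref{T04} by reusing the proof of Theorem~\ref{T01}, namely the identity $\left(eP^{n}\right)\left(j\right)\equiv\left\vert\left\vert\gamma_{j}^{\left(n\right)}\right\vert\right\vert_{L^{1}}$ together with the domination $k\leq Ce$, which is exactly your term-by-term bound $\left(kP^{n}\right)\left(j\right)\leq C\left\vert\left\vert\gamma_{j}^{\left(n\right)}\right\vert\right\vert_{L^{1}}\rightarrow0$. Your closing step for invariant measures ($h=hP^{n}$ and pass to the limit) also matches the paper's treatment.
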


Still stronger statement holds as well.

\begin{theorem}
\label{T05} Suppose that $P$ is a matrix with non-negative entries,
$p_{ij}\geq0$, such that for all $j$ the functions $\gamma_{j}^{\left(
n\right)  },$ defined by relations $\left(  \ref{01}\right)  ,$ $\left(
\ref{02}\right)  ,$ belong to $L^{1}$ once $n\geq n_{0}.$ Suppose that the
convergence
\[
\left\vert \left\vert \gamma_{j_{0}}^{\left(  n\right)  }\right\vert
\right\vert _{L^{1}}\rightarrow0\text{ as }n\rightarrow\infty
\]
holds for just one value $j=j_{0},$ and suppose also that all the
corresponding matrix elements $p_{ij_{0}}$ are positive. Then we have the
convergence%
\[
\left\vert \left\vert \gamma_{j}^{\left(  n\right)  }\right\vert \right\vert
_{L^{1}}\rightarrow0\text{ as }n\rightarrow\infty
\]
for all other values of $j,$ so in particular all the conclusions of the
preceding Theorem holds.
\end{theorem}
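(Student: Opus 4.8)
The plan is to work directly with the $L^1$-norms of the columns, which I abbreviate by $S_j^{(n)}=\left\Vert \gamma_j^{(n)}\right\Vert_{L^1}=\sum_i\left(P^n\right)_{ij}$, the sum over the $j$-th column of $P^n$. The entire statement reduces to a single scalar recursion among these column sums together with a one-line domination argument, so no compactness or spectral input beyond what is already in Theorem \ref{T04} is needed.

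First I would establish, for $n+1\geq n_0$, the identity
\[
S_{j_0}^{(n+1)}=\sum_k p_{kj_0}\,S_k^{(n)}.
\]
This follows by writing $P^{n+1}=P^n P$, so that $\left(P^{n+1}\right)_{ij_0}=\sum_k\left(P^n\right)_{ik}\,p_{kj_0}$, and then summing over the row index $i$; since all entries are non-negative, interchanging the two summations is legitimate by Tonelli, and the resulting sums are finite because $\gamma_{j_0}^{(n+1)}\in L^1$ by hypothesis. The key point now is the positivity of the distinguished column: because $p_{kj_0}>0$ for every $k$ while every $S_k^{(n)}\geq0$, I may discard all terms but one to obtain, for each fixed $j$,
\[
S_{j_0}^{(n+1)}\;\geq\;p_{jj_0}\,S_j^{(n)}.
\]
Dividing by the strictly positive constant $p_{jj_0}$ gives $S_j^{(n)}\leq S_{j_0}^{(n+1)}/p_{jj_0}$, and since $S_{j_0}^{(n+1)}\to0$ by assumption, this forces $\left\Vert\gamma_j^{(n)}\right\Vert_{L^1}=S_j^{(n)}\to0$ for every $j$. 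This is exactly the asserted propagation of convergence from the single column $j_0$ to all columns.

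It then remains to deduce the conclusions of Theorem \ref{T04}. For any $L^\infty$ measure $k\geq0$ with $k(i)\leq C$ uniformly, I would bound $\left(kP^n\right)(j)=\sum_i k(i)\left(P^n\right)_{ij}\leq C\,S_j^{(n)}\to0$, which gives the pointwise convergence $kP^n\to0$; applying this to an invariant measure $h=hP=hP^n$ immediately yields $h=0$, so zero is the only invariant measure. The argument has essentially no hard obstacle: the only points demanding care are the bookkeeping that keeps all column sums finite for $n\geq n_0$ and the justification of the summation interchange. The genuine conceptual content is the recognition that the correct quantity to iterate is the column sum $S_j^{(n)}$, and that positivity of just the one column $j_0$—rather than of the whole matrix—already suffices to dominate every other column sum by $S_{j_0}^{(n+1)}$.
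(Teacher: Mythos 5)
Your proof is correct and follows essentially the same route as the paper's: the identity $S_{j_0}^{(n+1)}=\sum_k p_{kj_0}S_k^{(n)}$ is exactly the paper's relation $eP^{n}\gamma_{j_{0}}^{\left(1\right)}=\sum_{i}p_{ij_{0}}\,eP^{n}\gamma_{i}^{\left(0\right)}$, and the positivity of the column $p_{\cdot j_0}$ is used in both arguments to force every column sum to vanish. Your write-up merely makes explicit what the paper leaves implicit (the Tonelli interchange, the division by $p_{jj_0}$, and the deduction of the conclusions of Theorem \ref{T04} via $\left(kP^{n}\right)\left(j\right)\leq C\,S_j^{(n)}$), so there is nothing substantive to change.
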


\begin{proof}
In the notation of the proof of the Theorem \ref{T01}, the vectors $eP^{n}$
are well defined once $n\geq n_{0}.$ Indeed, $\left(  eP^{n}\right)  \left(
j\right)  \equiv\left\vert \left\vert \gamma_{j}^{\left(  n\right)
}\right\vert \right\vert _{L^{1}}.$ By our assumption we have $eP^{n}%
\gamma_{j_{0}}^{\left(  0\right)  }\rightarrow0$ as $n\rightarrow\infty.$ But
that implies immediately that also $eP^{n}\gamma_{j_{0}}^{\left(  1\right)
}\rightarrow0$ as $n\rightarrow\infty$. Evidently,
\[
eP^{n}\gamma_{j_{0}}^{\left(  1\right)  }=\sum_{i}p_{ij_{0}}\ eP^{n}\gamma
_{i}^{\left(  0\right)  }.
\]
Since $p_{ij_{0}}>0$ for all $i,$ the convergence $\sum_{i}p_{ij_{0}}%
\ eP^{n}\gamma_{i}^{\left(  0\right)  }\rightarrow0$ as $n\rightarrow\infty$
implies that $eP^{n}\gamma_{i}^{\left(  0\right)  }\rightarrow0$ for every
$i.$
\end{proof}

The claim of the Corollary \ref{C} is still valid.

\subsubsection{\medskip The servers network}

We suppose that the network consists of $Nm$ servers of $m=m\left(  N\right)
$ types, with $m=m\left(  N\right)  \rightarrow\infty$ as $N\rightarrow
\infty.$ Again, to every type $i$ the random service time, $\eta_{i}$, is
assigned. The probability of going from server of type $i$ to type $j$ is
$p_{ij},$ while the probability of leaving the system is $\tilde{p}_{i\infty
}=1-\sum_{j}p_{ij}.$ Within the type $j$ the customer chooses the server
uniformly. The notations $v_{i},$ $\pi_{N}$ have the same meaning as above.

\subsubsection{ Weak PH}

In the limit as $N\rightarrow\infty$ we again have a convergence to a system
of Non-Linear Markov Processes (NLMP), see \cite{BRS}. All the claims of the
Section $\left(  \ref{wph}\right)  $ remains true. The only difference is that
now we have infinitely many NLMP-s.

\subsubsection{Strong PH}

Strong PH holds here as well, in the sense of Section $\left(  \ref{sph}%
\right)  .$ The proof of the claim $\left(  \ref{004}\right)  $ proceeds in
the same way as there. The contraction property $\left(  \ref{12}\right)  $ of
the operator $P$ is now ensured by the Theorems \ref{T04}, \ref{T05}, see
Corollary \ref{C}. In order to use the contraction property we need to know in
advance that the network is underloaded. That is, we need to know that the
initial state is forgotten after some time, and that the inflow rates are not
too high. The conditions on inflow rates are very similar to those of the
Theorem \ref{FSHP}. Namely, let $\mathbf{V=}\left\{  v_{i}\right\}  $ be the
vector of the (constant) inflow rates; we need that the vector
\begin{equation}
\mathbf{\bar{V}}=\mathbf{V}+\mathbf{V}P+\mathbf{V}P^{2}+... \label{54}%
\end{equation}
satisfies for each $i$ the inequality%
\begin{equation}
\mathbb{E}\left(  \eta_{i}\right)  \mathbf{\bar{V}}_{i}<1. \label{53}%
\end{equation}
Let $\varkappa_{i}$ be the initial states of our network. The only condition
we need is that at every node we have a \textit{finite} random queue, which
means (tautologically) that the probability of infinite queue is $0.$ The
reason is that for the networks defined by the matrix $P$ satisfying the
conditions of Theorem \ref{T01}, Theorem \ref{T04} or Theorem \ref{T05} the
network becomes underloaded after some finite time, provided $\left(
\ref{53}\right)  $ holds. It is proven in \cite{KKR} for networks satisfying
the conditions of Theorem \ref{T01} and having exponential service times. In
the general case the proof is the same. So the initial state does not play any
role in the asymptotic state of our network, as it was the case for the finite networks.

The convergence property $\left(  \ref{54}\right)  $ hold, for example, for
any $\mathbf{V}$ from $L^{1}$ and any $P$ satisfying Theorem \ref{T01}, since
it can be shown that in this situation the matrix $P^{\ast}$ is evidently transient.

\textbf{ }Another example is if $\mathbf{V}$ is from $L^{\infty}$ and the set
$\mathrm{supp}\left(  \mathbf{V}\right)  $ is non-massive for the chain
$\mathfrak{M}^{\ast},$ defined for $P$ satisfying Theorem \ref{T01}. We recall
that a set $A$ is called non-massive for the chain $\mathfrak{M}^{\ast},$ if
the probability that the chain never hits $A$ is positive. For the proof see
\cite{DY}, Sect. 1.8.

We conclude by stating the main theorem of this section.

\begin{theorem}
\label{T02} Consider the network, defined by the matrix $P,$ satisfying the
conditions of either Theorem \ref{T01}, Theorem \ref{T04} or Theorem
\ref{T05}. Suppose the input rates satisfy the relations $\left(
\ref{54}\right)  ,$ $\left(  \ref{53}\right)  .$

There exist values $\hat{\lambda}_{i},\hat{b}_{i},$ depending only on the
rates $v_{i},$ service times $\eta_{i}$ and the matrix $P,$ such that for any
initial state $\varkappa=\otimes\varkappa_{i}$ of our network with finite
queues the limiting behavior of the functions $\lambda_{i}\left(  t\right)  ,$
$b_{i}\left(  t\right)  $ as $t\rightarrow\infty,$ does not depend on the
initial state $\varkappa$ of the system, and moreover for every $i$%
\[
\lambda_{i}\left(  t\right)  \rightarrow\hat{\lambda}_{i},\ b_{i}\left(
t\right)  \rightarrow\hat{b}_{i}\text{ as }t\rightarrow\infty.
\]

\end{theorem}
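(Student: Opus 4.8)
The plan is to reduce Theorem \ref{T02} to the machinery already assembled for the finite case (Theorem \ref{FSHP}) by replacing each ingredient with its countable analogue. The overall architecture of the argument is unchanged: I would first establish the weak flattening inequalities $\left(  \ref{31}\right)  $ and $\left(  \ref{32}\right)  $ for the $\limsup$ and $\liminf$ of the rate functions, then feed them through the balance equation $\left(  \ref{006}\right)  $ to obtain $\mathcal{L}\leq\mathcal{L}P$ coordinate-wise, and finally invoke a contraction property of $P$ to force $\mathcal{L}=0.$ The essential new point is that in the countable setting the $L^{1}$-contraction $\left(  \ref{12}\right)  ,$ which in the finite case came directly from the absorbing-state hypothesis $\left(  \ref{10}\right)  ,$ must instead be supplied by Corollary \ref{C}: under any of Theorems \ref{T01}, \ref{T04}, \ref{T05} we have $kP^{n}\to0$ pointwise for every nonnegative $L^{\infty}$ measure $k.$ Since the difference vector $\mathcal{L}=\left\{  \lambda_{j}^{+}-\lambda_{j}^{-}\right\}  $ is bounded (the rates are bounded by the underload hypothesis), it is an $L^{\infty}$ measure, and the iterated inequality $\mathcal{L}\leq\mathcal{L}P^{n}$ together with pointwise vanishing of $\mathcal{L}P^{n}$ gives $\mathcal{L}=0$ exactly as before.

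Before that last step can be run, however, I must secure the two analytic inputs on which the finite proof rested, now in the infinite-network regime. First I would verify the regularity property $\left(  \ref{0024}\right)  $ for the kernels $q_{\bar{\lambda}_{i},t}$ at every node $i,$ which as in the finite case reduces to the mean-value estimate $\left(  \ref{15}\right)  ,$ namely $\mathbb{E}\left(  \chi_{\lambda_{i},t}\right)  \leq c\,t/\mathbb{E}\left(  \eta_{i}\right)$ with $c<1.$ This is where the hypotheses $\left(  \ref{54}\right)  $ and $\left(  \ref{53}\right)  $ enter: the expected number of customers visiting server $\mathcal{N}_{i}$ in $\left[0,t\right]$ is bounded by $\mathbf{\bar{V}}_{i}t,$ and $\left(  \ref{53}\right)  $ then yields $c<1.$ Second, and this is the genuinely new obstacle, I must guarantee that the network is \emph{eventually} underloaded starting from an arbitrary initial state $\varkappa$ with finite queues. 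In the finite case the Claim about $\Xi=\sum_{i=1}^{\eta}\xi_{i}$ handled this by showing any initial backlog is almost surely finite and hence asymptotically negligible; here the same Claim applies node-by-node, but one must additionally check that the convergence $\left(  \ref{54}\right)  $ of $\mathbf{\bar V}$ is honest, i.e. that descendants of the initial customers do not accumulate at any fixed node through the countably many available paths. This is precisely the transience content of Theorems \ref{T04}, \ref{T05} (or, for double semi-stochastic $P,$ Theorem \ref{T01}), and the paper's remark that $\left(  \ref{54}\right)  $ holds for $\mathbf{V}\in L^{1}$ or for $\mathbf{V}\in L^{\infty}$ supported on a non-massive set for $\mathfrak{M}^{\ast}$ is what I would cite to close this gap.

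The hard part is thus the passage from the finite $N$ stochastic system to the NLMP limit uniformly across infinitely many nodes: one needs the flattening relation $\left(  \ref{005}\right)  $ and the support estimate to hold simultaneously at all $i,$ which requires the uniform-in-$i$ clauses in regularity properties 4 and 5 of the service-time distributions. Granting these, I would argue that the $\limsup/\liminf$ inequalities $\left(  \ref{31}\right)  ,$ $\left(  \ref{32}\right)  $ hold at each node by the same averaging argument — $b_{j}\left(  t\right)$ is a kernel-average of $\lambda_{j}\left(  \cdot\right)$ over $\left[t-K\left(  t\right)  ,t\right]$ with $t-K\left(  t\right)  \to\infty$ — and then the contraction from Corollary \ref{C} finishes the proof that both $\lambda_{i}\left(  t\right)$ and $b_{i}\left(  t\right)$ converge to state-independent limits $\hat\lambda_{i},\hat b_{i}.$ In short, the proof is structurally identical to that of Theorem \ref{FSHP}, with $\left(  \ref{12}\right)  $ replaced by Corollary \ref{C} and the finite absorbing-state mechanism replaced by the transience conditions of Theorems \ref{T01}, \ref{T04}, \ref{T05}; the only steps requiring real care are the eventual-underload claim and the uniformity of the analytic estimates over the infinitely many server types.
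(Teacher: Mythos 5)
Your proposal is correct and takes essentially the same route as the paper: the paper's own proof of Theorem \ref{T02} consists precisely of the observation that, after the preceding remarks, the argument of Theorem \ref{FSHP} goes through verbatim once the contraction property $\left(\ref{12}\right)$ is supplied by Corollary \ref{C} and the eventual underload is secured by $\left(\ref{54}\right)$, $\left(\ref{53}\right)$, the finiteness of the initial queues, and the transience conditions of Theorems \ref{T01}, \ref{T04}, \ref{T05} — exactly the ingredients you assemble. Your write-up is in fact more explicit than the paper's, which omits these details and cites \cite{KKR} for the eventual-underload step.
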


\begin{proof}
After all the remarks made above, the proof goes in the same way as for the
Theorem \ref{FSHP}, and is therefore omitted.
\end{proof}

\section{Closed systems}

Here we again consider the same situation as in the Section 2, but now the
chain $\mathfrak{M}_{m}$ has $m$ states, with transition matrix $P,$ and all
the exit probabilities $p_{i\infty}$ are zero. That is, our network is closed;
all exterior flow rates $v_{i}$ are then equal to zero. We assume additionally
that all matrix elements $p_{ij}>0,$ so the chain is ergodic. Again we will
study the mean-field type model, where we interconnect $N$ copies of our
network. For every $N$ we will have fixed number of customers, $K,$ and we
will consider the limit when $N,K\rightarrow\infty,$ so that $\frac{K}%
{Nm}\rightarrow\rho.$

For every $N$ we have some initial conditions, and we suppose that as
$N\rightarrow\infty,$ they converge to the limit, which will be the initial
condition $\varkappa=\otimes\varkappa_{i}$ for the Non-Linear Markov Process.
Now, in contrast with the open systems, we need to impose some restrictions on
$\varkappa$ in order to have strong PH. It is the same condition which
appeared already in \cite{RS1} -- the finiteness of the expected service times
$S\left(  \varkappa_{i}\right)  .$ They are defined as follows.

Consider the function
\[
R_{\eta_{i}}\left(  \tau\right)  =\mathbb{E}\left(  \eta_{i}\Bigm|_{\tau
}\right)  ,
\]
which is the expected (remaining) time of the service of the client, who
already spent the time $\tau$ in the server. For a queue $\omega=\left(
n,\tau\right)  ,$ containing $n$ clients, one of whom is already served for
the time $\tau,$ we define its expected service time, $S\left(  \omega\right)
,$ by%
\[
S\left(  \omega\right)  =\left\{
\begin{array}
[c]{ll}%
0 & \text{ for }\omega=\mathbf{0,}\\
\left(  n-1\right)  \mathbb{E}\left(  \eta_{i}\right)  +R_{\eta_{i}}\left(
\tau\right)  & \text{for }\omega=\left(  n,\tau\right)  ,\text{with }n>0.
\end{array}
\right.
\]
We then define the expected service time $S\left(  \varkappa\right)  $ as
$\mathbb{E}_{\varkappa}\left(  S\left(  \omega\right)  \right)  .$

Note that if the total expected service time $\sum_{i}S\left(  \varkappa
_{i}\right)  $ is finite, then so is the total expected number of clients in
our network.

In the following we will sketch the proof of the Strong PH for the above
setting. The main ideas are contained in \cite{RS1}.

First of all, we have the balance relation:%
\begin{equation}
\lambda_{i}\left(  t\right)  =\sum_{j}b_{j}\left(  t\right)  p_{ji}.
\label{69}%
\end{equation}
We also have for all $i=1,2,...,m:$
\begin{equation}
b_{i}\left(  t\right)  =\left(  \lambda_{i}\ast q_{i,\lambda_{i},t}\right)
\left(  t\right)  +\varepsilon_{i}\left(  t\right)  , \label{61}%
\end{equation}
where $\varepsilon_{i}\left(  t\right)  \rightarrow0$ as $t\rightarrow\infty.$
(This is the analog of the relation $\left(  \ref{005}\right)  $). The case
$m=1$ is the one treated in \cite{RS1}. The relation $\left(  \ref{61}\right)
$ boils then down to
\begin{equation}
\lambda\left(  t\right)  =\left(  \lambda\left(  \cdot\right)  \ast
q_{\lambda\left(  \cdot\right)  ,t}\right)  \left(  t\right)  +\varepsilon
\left(  t\right)  . \label{68}%
\end{equation}

In \cite{RS1} we were able to show that some apriori properties of the
function $\lambda$ imply corresponding properties of the stochastic kernels
$q_{\lambda,t},$ which in turn imply the convergence of the function
$\lambda\left(  t\right)  $ to the limit value as $t\rightarrow\infty$. The
properties of the kernels needed are the following:

\begin{enumerate}
\item for every $\varepsilon>0$ there exists a value $K\left(  \varepsilon
\right)  ,$ such that
\begin{equation}
\int_{0}^{K\left(  \varepsilon\right)  }q_{\lambda\left(  \cdot\right)
,t}\left(  x\right)  \,dx\geq1-\varepsilon\label{111}%
\end{equation}
uniformly in $t,\lambda\left(  \cdot\right)  .$

\item For every $T$ the (monotone continuous) function
\begin{equation}
F_{T}\left(  \delta\right)  =\inf_{x\geq X\left(  T\right)  }\inf
_{\substack{D\subset\left[  0,T\right]  :\\\mathrm{mes}D\geq\delta}}\int
_{D}q_{\lambda\left(  \cdot\right)  ,t}\left(  x\right)  \,dx \label{112}%
\end{equation}
is positive once $\delta>0,$ for some choice of the function $X\left(
T\right)  <\infty$ -- compare with relations (99), (100) of \cite{RS1}.
\end{enumerate}

\noindent They were derived in \cite{RS1} from the fact that in this situation
the probability that the node is empty becomes positive after some (long)
time. This property, in turn, follows from the fact that the mean number of
clients is conserved in the NLMP.

For $m>1$ the situation is a bit more complex. To treat it, let us introduce
the families $\Gamma_{i,n}$ of the trajectories of the chain $\mathfrak{M}%
_{m},$ $i=1,2,...m,$ $n\geq1.$ The family $\Gamma_{i,n}$ consists of all loops
$\gamma\left(  t\right)  \in\left\{  1,...,m\right\}  ,$ $t=0,1,...,n,$ such
that $\gamma\left(  0\right)  =\gamma\left(  n\right)  =i,$ while
$\gamma\left(  k\right)  \neq i$ for $k=1,...,n-1.$ For $\gamma\in\Gamma
_{i,n}$ let us define
\[
p\left(  \gamma\right)  =\prod_{k=1}^{n}p_{\gamma\left(  k-1\right)
\gamma\left(  k\right)  }.
\]
Then, for all $i$ we have
\[
\sum_{n}\sum_{\gamma\in\Gamma_{i,n}}p\left(  \gamma\right)  =1,
\]
since the last sum is precisely the probability of the event to return to the
state $i,$ starting from it. It follows that for all $i$%
\begin{align}
\lambda_{i}\left(  t\right)   &  =\sum_{n}\sum_{\gamma\in\Gamma_{i,n}}p\left(
\gamma\right)  \underset{n}{\underbrace{\int...\int}}q_{\gamma\left(
n-1\right)  ,\lambda_{\gamma\left(  n-1\right)  },t}\left(  x_{1}\right)
\label{62}\\
&  q_{\gamma\left(  n-2\right)  ,\lambda_{\gamma\left(  n-2\right)  },t-x_{1}%
}\left(  x_{2}\right)  ...q_{\gamma\left(  0\right)  ,\lambda_{\gamma\left(
0\right)  },t-x_{1}-...-x_{n-1}}\left(  x_{n}\right) \nonumber\\
&  \lambda_{i}\left(  t-x_{1}-...-x_{n}\right)  dx_{1}...dx_{n}.\nonumber
\end{align}
Indeed, from $\left(  \ref{69}\right)  $ and $\left(  \ref{61}\right)  $ we
have, ignoring the $\varepsilon$\--term, that
\begin{equation}
\lambda_{i}\left(  t\right)  =\sum_{j}\left(  \lambda_{j}\ast q_{i,\lambda
_{j},t}\right)  \left(  t\right)  p_{ji}. \label{71}%
\end{equation}
Let us fix an index $i,$ write this expression for all $j\neq i$ and insert
the resulting representations in $\left(  \ref{71}\right)  .$ Iterating this
procedure we get $\left(  \ref{62}\right)  .$ The advantage of $\left(
\ref{62}\right)  $ over $\left(  \ref{61}\right)  $ is that it expresses each
function $\lambda_{i}\left(  t\right)  $ via itself at earlier times, as is
the case in relation $\left(  \ref{68}\right)  ,$ via convolution with the
stochastic kernels%
\[
Q_{\left\{  \lambda_{1},...,\lambda_{m}\right\}  ,t}\left(  X\right)
=\sum_{\gamma}p\left(  \gamma\right)  Q_{\left\{  \lambda_{1},...,\lambda
_{m}\right\}  ,\gamma,t}\left(  X\right)  ,
\]
where
\begin{align*}
&  Q_{\left\{  \lambda_{1},...,\lambda_{m}\right\}  ,\gamma,t}\left(
X\right)  =\\
&  =\int...\int q_{\gamma\left(  n-1\right)  ,\lambda_{\gamma\left(
n-1\right)  },t}\left(  x_{1}\right)  q_{\gamma\left(  n-2\right)
,\lambda_{\gamma\left(  n-2\right)  },t-x_{1}}\left(  x_{2}\right)  \times\\
&  \times...q_{\gamma\left(  0\right)  ,\lambda_{\gamma\left(  0\right)
},t-x_{1}-...-x_{n-1}}\left(  X-x_{1}-...-x_{n-1}\right)  dx_{1}%
...dx_{n_{n-1}};
\end{align*}
these kernels, however, do depend on all the functions $\left\{  \lambda
_{1},...,\lambda_{m}\right\}  $ at earlier times. To extract the relaxation
properties of the functions $\lambda_{i}$ from the representation $\left(
\ref{62}\right)  $ we again need to check that the kernels $Q$ have the
properties 1, 2 listed above. Now we do not have the property that the mean
number of clients at every node is conserved in the NLMP. However in our
closed network we have the fact that the mean number of clients in the whole
network is conserved, and that implies that at every node the mean number of
clients is bounded, which property is sufficient for our purposes, as the
analysis in \cite{RS1} shows.

\end{document}